\newtheorem{theorem}[subsection]{Theorem}
\newtheorem{corollary}[subsection]{Corollary}
\newtheorem{remark}[subsection]{Remark}
\newtheorem{example}[subsection]{Example}
\newtheorem{proposition}[subsection]{Proposition}
\newtheorem{lemma}[subsection]{Lemma}
\title{On self-homeomorphisms of the Macías space}
\author{Jhixon Macías}
\address{Jhixon Macías\newline
University of Puerto Rico at Mayaguez, Mayaguez, PR, USA\newline
United States of America}
\email{jhixon.macias@upr.edu}
\keywords{Homeomorphims, Golomb’s topology, Macías topology}
\subjclass[2020]{54A05; 54G05; 54H11}
\begin{document}

\begin{abstract}
In this paper, we study some properties of self-homeomorphisms on the Macías topology over $\mathbb{N}$, and we demonstrate that this space is not topologically rigid.
\end{abstract}

\maketitle
\section{Introduction}\label{section1}
In 1955, H. Furstenberg introduced a topology $\tau_F$ on the integers $\mathbb{Z}$ generated by the collection of arithmetic progressions of the form $a+b\mathbb{Z}$ ($a,b\in \mathbb{Z}, b\geq 1$) and with it presented the first topological proof of the infinitude of prime numbers, see \cite{furstenberg1955infinitude}. A couple of years earlier (1953), M. Brown had introduced a topology $\tau_G$ on the natural numbers $\mathbb{N}$ that turns out to be coarser than Furstenberg's topology induced on $\mathbb{N}$. This latter topology is generated by the collection of arithmetic progressions $a+b\mathbb{N}$ with $a\in \mathbb{N}$ and $b\in\mathbb{N}\cup\{0\}$ such that $\gcd(a,b)=1$. It was not until 1959 that the topology introduced by Brown was popularized by S. Golomb (now known as Golomb's topology) who in \cite{golomb1959connected} proves that Dirichlet's theorem on arithmetic progressions is equivalent to the set of prime numbers $\mathbb{P}$ being dense in the topological space $(\mathbb{N},\tau_G)$. Golomb also presents a topological proof of the infinitude of prime numbers, verifying that Furstenberg's argument also works on $(\mathbb{N},\tau_G)$. A summary of the basic properties of the Furstenberg and Golomb topologies can be found in \cite{steen1978counterexamples}. In 2018, T. Banakh, J. Mioduszewski, and S. Turek \cite{banakh2017continuous} studied some properties of the continuous self-maps and homeomorphisms of the Golomb space. Motivated by this work, in 2020, T. Banakh, D. Spirito, and S. Turek \cite{banakh2019golomb} demonstrated that, indeed, the Golomb space is \textit{topologically rigid}. Topological spaces with a trivial homeomorphism group are called topologically rigid. Later, in 2021, T. Banakh, Y. Stelmakh, and S. Turek \cite{banakh2021kirch} showed that the \textit{Kirch space}, a coarser space than the Golomb space, is also topologically rigid.

In \cite{Jhixon2023} the \textit{Macías topology} $\tau_M\subset\tau_{G}$ is introduced, which is generated by the collection of sets $\sigma_n:=\{m\in\mathbb{N}: \gcd(n,m)=1\}$. Additionally, some of its properties are studied. For example, the topological space $(\mathbb{N},\tau_M)$ does not satisfy the $\mathrm{T}_0$ separation axiom, satisfies (vacuously) the $\mathrm{T}_4$ separation axiom, is ultraconnected, hyperconnected (therefore connected, locally connected, path-connected), not countably compact (therefore not compact), but it is limit point compact and pseudocompact. On the other hand, for $n,m\in\mathbb{N}$, it holds that $\textbf{cl}_{(\mathbb{N},\tau_M)}(\{nm\})=\textbf{cl}_{(\mathbb{N},\tau_M)}(\{n\})\cap \textbf{cl}_{(\mathbb{N},\tau_M)}(\{m\})$ where $\textbf{cl}_{(\mathbb{N},\tau_M)}(\{n\})$ denotes the closure of the singleton set $\{n\}$ in the topological space $(\mathbb{N},\tau_M)$. Furthermore, in \cite{jhixon2024} (usind the topology $\tau_M$), a topological proof of the infinitude of prime numbers is presented (different from the proofs of Furstenberg and Golomb). In the same work, the infinitude of any non-empty subset of prime numbers is characterized in the following sense: if $A\subset \mathbb{P}$ (non-empty), then $A$ is infinite if and only if $A$ is dense in $(\mathbb{N},\tau_M)$, see \cite[Theorem 4]{jhixon2024}.

In this work, motivated by the studies of T. Banakh, S. Turek, J. Mioduszewski, D. Spirito, and Y. Stelmakh, we examine some properties of the self-homeomorphisms of the Macías space \( M(\mathbb{N}):=(\mathbb{N}, \tau_M) \) and show that this space is not topologically rigid. As a result of this study, we characterize the homeomorphisms on \( M(\mathbb{N}) \) (see Theorem \ref{mainth}).

\section{Preliminaries}\label{section2}

Recall that the Macías space $M(\mathbb{N})$ is the topological space $(\mathbb{N},\tau_M)$ where $\tau_M$ is generated by the collection of sets $\sigma_n := \{ m \in \mathbb{N} : \gcd(n, m) = 1 \}$. Here, $\gcd(n, m)$ denotes the \textit{greatest common divisor of $n$ and $m$}. For each $x \in \mathbb{N}$, we denote by $x \cdot \mathbb{N}$ the set $\{ x \cdot n : n \in \mathbb{N} \}$. A topological space $X$ is hyperconnected (ultraconnected) if there are no two non-empty disjoint open (closed) sets. The following theorem will be used implicitly in Section \ref{section 3}.
\begin{theorem}[\cite{Jhixon2023},\cite{jhixon2024},\cite{MACIAS2024109070}]\label{thpreli} In $M(\mathbb{N})$, the following propositions hold:

\begin{enumerate}
\item The space $M(\mathbb{N})$ is hyperconnected and ultraconnected.
\item For any $n, m \in \mathbb{N}$, $\sigma_{n \cdot m } = \sigma_n \cap \sigma_m$. Consequently, $\sigma_{n^\alpha} = \sigma_n$ for any positive integer $\alpha$.
\item For each $n \in \mathbb{N}$, $\sigma_n$ is infinite, since $$\sigma_n = \displaystyle\bigcup_{\substack{1 \leq m \leq n \\ \gcd(m, n) = 1}} n \cdot \left( \mathbb{N} \cup \{0\} \right) + m.$$
\item For any prime number $p$, $\mathbb{N} \setminus \sigma_p = p \cdot \mathbb{N}$.
\item The set $\{1\}$ is dense in $M(\mathbb{N})$ ($1 \in \sigma_n$ for all $n \in \mathbb{N}$). Therefore, $\mathbf{cl}_{M(\mathbb{N})}(\{1\}) = \mathbb{N}$.
\item If $n > 1$, then $$\mathbf{cl}_{M(\mathbb{N})}(\{n\}) = \bigcap_{i=1}^k p_i \cdot \mathbb{N} = p_1 \cdot p_2 \cdots p_k \cdot \mathbb{N},$$ where each $p_i$ is a prime factor in the prime decomposition of $n$.
\item For any $n, m \in \mathbb{N}$, $\mathbf{cl}_{M(\mathbb{N})}(\{n \cdot m\}) = \mathbf{cl}_{M(\mathbb{N})}(\{n\}) \cap \mathbf{cl}_{M(\mathbb{N})}(\{m\})$. Consequently, $\mathbf{cl}_{M(\mathbb{N})}(\{n^\alpha\}) = \mathbf{cl}_{M(\mathbb{N})}(\{n\})$ for any positive integer $\alpha$.
\end{enumerate}
\end{theorem}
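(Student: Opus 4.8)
The plan is to derive all seven items from two elementary facts: that for all $n,m,k\in\mathbb{N}$ one has $\gcd(nm,k)=1$ if and only if $\gcd(n,k)=1$ and $\gcd(m,k)=1$ (immediate from unique factorization), and that $1\in\sigma_n$ for every $n$ since $\gcd(n,1)=1$. The first fact is precisely the identity $\sigma_{nm}=\sigma_n\cap\sigma_m$ of (2); iterating it gives $\sigma_{n^\alpha}=\sigma_n$, and, more usefully, it shows the generating family $\{\sigma_n:n\in\mathbb{N}\}$ is closed under finite intersections, hence is a \emph{base} for $\tau_M$. So from here on a neighborhood of a point $x$ may be taken to be a single $\sigma_k$ with $\gcd(k,x)=1$, because $\sigma_{k_1}\cap\cdots\cap\sigma_{k_r}=\sigma_{k_1\cdots k_r}$. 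Items (3) and (4) are then pure number theory: writing an arbitrary $k\in\mathbb{N}$ as $k=nj+m$ with $j\ge 0$ and $1\le m\le n$ gives $\gcd(k,n)=\gcd(m,n)$, which yields the displayed decomposition of $\sigma_n$ in (3), and taking $m=1$ exhibits the infinite progression $\{1,\,n+1,\,2n+1,\dots\}\subseteq\sigma_n$; for (4), with $p$ prime, $k\notin\sigma_p$ iff $p\mid k$, i.e. $\mathbb{N}\setminus\sigma_p=p\cdot\mathbb{N}$.

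The technical core is the closure computation behind (5) and (6). Writing $\mathrm{rad}(n)$ for the product of the distinct primes dividing $n$, with $\mathrm{rad}(1)=1$ (empty product), I claim $\mathbf{cl}_{M(\mathbb{N})}(\{n\})=\mathrm{rad}(n)\cdot\mathbb{N}$. Since $\{\sigma_k\}$ is a base, $m\in\mathbf{cl}_{M(\mathbb{N})}(\{n\})$ iff every $\sigma_k$ containing $m$ contains $n$, i.e. iff for all $k\in\mathbb{N}$, $\gcd(k,m)=1\Rightarrow\gcd(k,n)=1$. If some prime $p$ divides $n$ but not $m$, then $k=p$ violates this, so membership forces $\mathrm{rad}(n)\mid m$; conversely, if $\mathrm{rad}(n)\mid m$ and $\gcd(k,m)=1$, then $k$ shares no prime with $\mathrm{rad}(n)$, hence none with $n$, so $\gcd(k,n)=1$. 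This proves the claim, which for $n>1$ is exactly (6) (and $\bigcap_{i=1}^k p_i\cdot\mathbb{N}=p_1\cdots p_k\cdot\mathbb{N}$ because the $p_i$ are distinct primes), and for $n=1$ reads $\mathbf{cl}_{M(\mathbb{N})}(\{1\})=\mathbb{N}$, i.e. (5).

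Items (1) and (7) then follow formally. Every nonempty open set contains a nonempty basic set $\sigma_k$, and $1\in\sigma_k$, so any two nonempty open sets meet: $M(\mathbb{N})$ is hyperconnected. Dually, every nonempty closed set $C$ contains some $n$, hence contains $\mathbf{cl}_{M(\mathbb{N})}(\{n\})=\mathrm{rad}(n)\cdot\mathbb{N}$; since $a\cdot\mathbb{N}\cap b\cdot\mathbb{N}=\mathrm{lcm}(a,b)\cdot\mathbb{N}\ne\emptyset$, any two nonempty closed sets meet: $M(\mathbb{N})$ is ultraconnected. For (7), $\mathbf{cl}_{M(\mathbb{N})}(\{nm\})=\mathrm{rad}(nm)\cdot\mathbb{N}$ while $\mathbf{cl}_{M(\mathbb{N})}(\{n\})\cap\mathbf{cl}_{M(\mathbb{N})}(\{m\})=\mathrm{lcm}(\mathrm{rad}(n),\mathrm{rad}(m))\cdot\mathbb{N}$; as radicals are squarefree, $\mathrm{lcm}(\mathrm{rad}(n),\mathrm{rad}(m))$ is the product of all primes dividing $n$ or $m$, which is $\mathrm{rad}(nm)$, giving the equality, and the case $m=n^{\alpha-1}$ yields $\mathbf{cl}_{M(\mathbb{N})}(\{n^\alpha\})=\mathbf{cl}_{M(\mathbb{N})}(\{n\})$.

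None of these steps is a genuine obstacle — the whole theorem is a package of elementary number theory combined with routine point-set arguments. The only places needing care are the uniform bookkeeping at $n=1$ (treating $\mathrm{rad}(1)=1$ and $\sigma_1=\mathbb{N}$ as the degenerate cases of the general formulas) and, for ultraconnectedness, the observation that it is enough to check that closures of singletons pairwise intersect, since every nonempty closed set contains such a closure.
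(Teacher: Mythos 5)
Your proposal is correct and complete. Note that the paper itself gives no proof of this theorem: it is imported by citation from \cite{Jhixon2023}, \cite{jhixon2024}, and \cite{MACIAS2024109070}, so there is no in-paper argument to compare against. Your argument is the natural one and the arguments in the cited sources run along the same lines: the identity $\sigma_{nm}=\sigma_n\cap\sigma_m$ makes $\{\sigma_k\}$ a base, the closure of $\{n\}$ is computed as $\mathrm{rad}(n)\cdot\mathbb{N}$ by testing against basic neighborhoods, and the remaining items (hyperconnectedness via $1\in\sigma_k$ for all $k$, ultraconnectedness via common multiples, and the multiplicativity of closures via $\mathrm{lcm}(\mathrm{rad}(n),\mathrm{rad}(m))=\mathrm{rad}(nm)$) follow formally. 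Your handling of the degenerate case $n=1$ and the reduction of ultraconnectedness to closures of singletons are exactly the points that need care, and you address both.
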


\section{Self-homeomorphisms on $M(\mathbb{N})$}\label{section 3}

A topological space $X$ is defined to be \textit{topologically rigid} if its homeomorphism group is trivial. As mentioned in the introduction, T. Banakh, D. Sirito, and S. Turek demonstrate that the Golomb space is topologically rigid. Later, T. Banakh, Y. Stelmakh, and S. Turek show that the Kirch space is topologically rigid. Motivated by this, since the Macías space is coarser than the Golomb space, we ask: Is the Macías space topologically rigid? In this section, we will show that $M(\mathbb{N})$ is not topologically rigid, and we will study some of the properties of its self-homeomorphisms.

Let us begin with an example.

\begin{example}\label{exam1} Let $h: M(\mathbb{N}) \to M(\mathbb{N})$ be given by
\begin{equation*}
    h(n) = \left\{ \begin{array}{lcc} 4 & \text{if} & n = 2 \\ \\ 2 & \text{if} & n = 4 \\ \\ n & \text{otherwise.} \end{array} \right.
\end{equation*}
Note that $h$ is injective and surjective. Let's check that it is continuous. Consider an arbitrary basic open set, say $\sigma_k = \{ x \in \mathbb{N} : \gcd(x, k) = 1 \}$. If $k$ is even, $2,4 \notin \sigma_k$. Then,
\begin{equation*}
    h^{-1}(\sigma_k) = \{ n : \gcd(f(n), k) = 1 \} = \{ n : \gcd(n, k) = 1 \} = \sigma_k.
\end{equation*}
If $k$ is odd, $2, 4 \in \sigma_k$. Then,
\begin{equation*}
    h^{-1}(\sigma_k) = \{ n : \gcd(f(n), k) = 1 \} = \{ n : \gcd(n, k) = 1 \} = \sigma_k.
\end{equation*}
In either case, $h^{-1}(\sigma_k)$ is open in $M(\mathbb{N})$, which implies that $h$ is continuous. Therefore, $h$ is a homeomorphism. 
\end{example}

The previous example already indicates that the homeomorphism group of $M(\mathbb{N})$ is not trivial. From this, the following proposition follows.

\begin{proposition}
The space $M(\mathbb{N})$ is not topologically rigid.
\end{proposition}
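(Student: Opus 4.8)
The plan is essentially to invoke Example~\ref{exam1} directly: it exhibits a self-homeomorphism $h$ of $M(\mathbb{N})$ that is not the identity (since $h(2)=4\neq 2$), so the homeomorphism group of $M(\mathbb{N})$ contains a nontrivial element. By definition, a space is topologically rigid precisely when its homeomorphism group is trivial; hence $M(\mathbb{N})$ fails to be topologically rigid. The proof is therefore a one-line deduction from the preceding example, and the only thing to be careful about is that the example genuinely establishes what it claims.

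To that end, I would spell out the three points that Example~\ref{exam1} already checks, just to make the logical dependence explicit: (i) $h$ is a bijection, being an involution that swaps $2$ and $4$ and fixes everything else; (ii) $h$ is continuous, because for every basic open set $\sigma_k$ one has $h^{-1}(\sigma_k)=\sigma_k$ — the key arithmetic observation being that $2\in\sigma_k \iff 4\in\sigma_k$ (both hold iff $k$ is odd), so swapping $2$ and $4$ cannot move a point in or out of any $\sigma_k$; and (iii) since $h=h^{-1}$, the same computation shows $h$ is open (equivalently, $h^{-1}$ is continuous). Thus $h$ is a homeomorphism distinct from $\mathrm{id}_{\mathbb{N}}$.

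I do not anticipate a genuine obstacle here; the substantive work was done in constructing the example. If anything, the only subtlety worth a remark is \emph{why} one should expect such an example to exist at all — namely that $\tau_M$, unlike the Golomb topology, depends on $n$ only through its radical (the product of its distinct prime factors), by Theorem~\ref{thpreli}(2), so numbers with the same prime support are ``topologically interchangeable'' in a strong sense. That observation is not needed for the proposition but motivates the characterization theorem referenced in the introduction. For the proposition itself, I would simply write:

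\begin{proof}
By Example~\ref{exam1}, the map $h\colon M(\mathbb{N})\to M(\mathbb{N})$ swapping $2$ and $4$ and fixing all other natural numbers is a homeomorphism. Since $h(2)=4\neq 2$, we have $h\neq \mathrm{id}_{\mathbb{N}}$, so the homeomorphism group of $M(\mathbb{N})$ is nontrivial. Therefore $M(\mathbb{N})$ is not topologically rigid.
\end{proof}
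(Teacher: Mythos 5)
Your proposal is correct and matches the paper's argument exactly: the paper also deduces the proposition immediately from Example~\ref{exam1}, which provides a non-identity self-homeomorphism (swapping $2$ and $4$), so the homeomorphism group is nontrivial. No gap here.
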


Now, a series of lemmas that will help us characterize the homeomorphisms of $M(\mathbb{N})$.

\begin{lemma}\label{lem1h}
Let $h: M(\mathbb{N}) \to M(\mathbb{N})$ be a homeomorphism. Then $h(1) = 1$.
\end{lemma}
\begin{proof}
Suppose that $h(1) > 1$. Then we can write $h(1) = p_1^{\alpha_1} \cdot p_2^{\alpha_2} \cdots p_r^{\alpha_r}$. Thus,
\begin{equation*}
    \mathbf{cl}_{M(\mathbb{N})}(\{ h(1) \}) = p_1 \cdot p_2 \cdots p_r \cdot \mathbb{N}.
\end{equation*}
On the other hand, since $h$ is a homeomorphism and $1$ is dense in $M(\mathbb{N})$,
\begin{equation*}
    \mathbf{cl}_{M(\mathbb{N})}(\{ h(1) \}) = \mathbf{cl}_{M(\mathbb{N})}(h(\{1\})) = h(\mathbf{cl}_{M(\mathbb{N})}(\{1\})) = h(\mathbb{N}) = \mathbb{N},
\end{equation*}
and thus, $p_1 \cdot p_2 \cdots p_r \cdot \mathbb{N} = \mathbb{N}$, which is a contradiction. Therefore, $h(1) = 1$.
\end{proof}
%%%%%%%%%%%%%%%%%%%%%%%%%%%%%%%%%%%%%%5

\begin{lemma}\label{lem2h}
Let $h: M(\mathbb{N}) \to M(\mathbb{N})$ be a homeomorphism. Then
\begin{equation*}
    \mathbf{cl}_{M(\mathbb{N})}(\{h(nm)\}) = \mathbf{cl}_{M(\mathbb{N})}(\{h(n) \cdot h(m)\}),
\end{equation*}
for all $n, m \in \mathbb{N}$.
\end{lemma}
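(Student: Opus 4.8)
The plan is to chain together two facts: that a homeomorphism commutes with the closure operator, and the multiplicative property of singleton closures recorded in Theorem \ref{thpreli}(7). Neither ingredient requires new work, so the proof should be short; the only point demanding a moment's care is the interaction of $h$ with intersections, which is where a bijectivity hypothesis is silently used.

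First I would rewrite the left-hand side using the fact that $h$, being a homeomorphism, satisfies $h(\mathbf{cl}_{M(\mathbb{N})}(A)) = \mathbf{cl}_{M(\mathbb{N})}(h(A))$ for every $A \subseteq \mathbb{N}$. Applied to the singleton $\{nm\}$ this gives
\begin{equation*}
    \mathbf{cl}_{M(\mathbb{N})}(\{h(nm)\}) = h\left(\mathbf{cl}_{M(\mathbb{N})}(\{nm\})\right).
\end{equation*}
Next I would invoke Theorem \ref{thpreli}(7) to expand $\mathbf{cl}_{M(\mathbb{N})}(\{nm\}) = \mathbf{cl}_{M(\mathbb{N})}(\{n\}) \cap \mathbf{cl}_{M(\mathbb{N})}(\{m\})$, so that the right-hand side becomes $h\left(\mathbf{cl}_{M(\mathbb{N})}(\{n\}) \cap \mathbf{cl}_{M(\mathbb{N})}(\{m\})\right)$.

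Then, using that $h$ is a bijection (so that $h(A \cap B) = h(A) \cap h(B)$), I would distribute $h$ over the intersection and apply the closure-commutation property to each factor, obtaining
\begin{equation*}
    h\left(\mathbf{cl}_{M(\mathbb{N})}(\{n\})\right) \cap h\left(\mathbf{cl}_{M(\mathbb{N})}(\{m\})\right) = \mathbf{cl}_{M(\mathbb{N})}(\{h(n)\}) \cap \mathbf{cl}_{M(\mathbb{N})}(\{h(m)\}).
\end{equation*}
Finally I would apply Theorem \ref{thpreli}(7) once more, this time in the reverse direction and to the pair $h(n), h(m)$, to identify this last intersection with $\mathbf{cl}_{M(\mathbb{N})}(\{h(n) \cdot h(m)\})$, which is exactly the claimed equality.

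The main obstacle, such as it is, is simply to notice that the step $h(A \cap B) = h(A) \cap h(B)$ genuinely needs injectivity of $h$ (the inclusion $\supseteq$ would fail for a mere continuous surjection); since $h$ is assumed to be a homeomorphism this is automatic. Everything else is a direct substitution, so I do not anticipate any real difficulty.
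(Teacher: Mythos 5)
Your proposal is correct and follows exactly the same chain of equalities as the paper's proof: commute $h$ with the closure, apply Theorem \ref{thpreli}(7), distribute $h$ over the intersection via injectivity, and apply Theorem \ref{thpreli}(7) again in reverse. Your explicit remark that injectivity is what justifies $h(A \cap B) = h(A) \cap h(B)$ is a point the paper leaves implicit, but the argument is the same.
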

\begin{proof}
   This proof is based on the properties of the closure operator on $M(\mathbb{N})$ and the properties of homeomorphisms with respect to a closure operator in general. Note then that
   \begin{equation*}
       \begin{split} \mathbf{cl}_{M(\mathbb{N})}(\{h(n \cdot m)\}) &= h(\mathbf{cl}_{M(\mathbb{N})}(\{n \cdot m\})) \\
        &= h(\mathbf{cl}_{M(\mathbb{N})}(\{n\}) \cap \mathbf{cl}_{M(\mathbb{N})}(\{m\})) \\
        &= h(\mathbf{cl}_{M(\mathbb{N})}(\{n\})) \cap h(\mathbf{cl}_{M(\mathbb{N})}(\{m\})) \\
        &= \mathbf{cl}_{M(\mathbb{N})}(\{h(n)\}) \cap \mathbf{cl}_{M(\mathbb{N})}(\{h(m)\}) \\
        &= \mathbf{cl}_{M(\mathbb{N})}(\{h(n) \cdot h(m)\}),
       \end{split}
   \end{equation*}
   which completes the proof.
\end{proof}

Lemma \ref{lem2h} indicates that \textit{within} the closure operator $\mathbf{cl}_{M(\mathbb{N})}$, the homeomorphism $h$ is completely multiplicative. This fact, along with Lemma \ref{lem1h}, allows us to prove the following lemma.

\begin{lemma}\label{lem3h}
Let $h: M(\mathbb{N}) \to M(\mathbb{N})$ be a homeomorphism. Let $p$ be a prime number. Then $h(p) = h(p^\gamma)^\alpha$ where $h(p^\gamma)$ is prime.
\end{lemma}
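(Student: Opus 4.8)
The plan is to extract prime-power information from the closure formulas in Theorem~\ref{thpreli}. First I would observe that $h$, being a homeomorphism, must send points with "the same closure behavior" to points with the same closure behavior. Concretely, recall from Theorem~\ref{thpreli}(6) that for $n>1$, $\mathbf{cl}_{M(\mathbb{N})}(\{n\})$ depends only on the \emph{set of prime divisors} of $n$; so $\mathbf{cl}_{M(\mathbb{N})}(\{n\}) = \mathbf{cl}_{M(\mathbb{N})}(\{n'\})$ precisely when $n$ and $n'$ have the same prime radical. The key structural fact I want is that for a prime $p$, the closure $p\cdot\mathbb{N}$ is \emph{maximal} among proper closures of singletons: if $\mathbf{cl}_{M(\mathbb{N})}(\{p\}) \subsetneq \mathbf{cl}_{M(\mathbb{N})}(\{m\})$ then $\mathbf{cl}_{M(\mathbb{N})}(\{m\}) = \mathbb{N}$, i.e. $m = 1$. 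Indeed $\mathbf{cl}_{M(\mathbb{N})}(\{m\}) = q_1\cdots q_k\cdot\mathbb{N} \supseteq p\cdot\mathbb{N}$ forces (comparing which integers lie in each) the product $q_1\cdots q_k$ to divide $p$, hence $k=0$. Dually, $\mathbf{cl}_{M(\mathbb{N})}(\{p\})$ is minimal among the closures properly containing a given non-total closure only in a weaker sense; the relevant point is the maximality just stated.

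Next I would argue that $h(p)$ must be a prime power. By Lemma~\ref{lem1h}, $h(1)=1$, so $h$ restricts to a bijection on $\mathbb{N}\setminus\{1\}$, and since $\{1\}$ is the unique point with closure $\mathbb{N}$ (Theorem~\ref{thpreli}(5),(6)), $h$ maps points with proper singleton-closure to points with proper singleton-closure. A homeomorphism preserves the inclusion order on closures of singletons: $\mathbf{cl}(\{x\})\subseteq\mathbf{cl}(\{y\})$ iff $\mathbf{cl}(\{h(x)\})\subseteq\mathbf{cl}(\{h(y)\})$. Since $\mathbf{cl}_{M(\mathbb{N})}(\{p\})$ is maximal among proper closures of singletons, so is $\mathbf{cl}_{M(\mathbb{N})}(\{h(p)\})$; but by the paragraph above the maximal proper closures are exactly the $q\cdot\mathbb{N}$ for $q$ prime, so $\mathbf{cl}_{M(\mathbb{N})}(\{h(p)\}) = q\cdot\mathbb{N}$ for some prime $q$. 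By Theorem~\ref{thpreli}(6) this means $h(p) = q^{\beta}$ for some prime $q$ and some $\beta\ge 1$.

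Now I would use Lemma~\ref{lem2h} to pin down the exponent structure. Write $h(p) = q^{\beta}$ with $q$ prime. The idea is that $q$ itself is hit: since $h$ is surjective and $\mathbf{cl}_{M(\mathbb{N})}(\{q\}) = q\cdot\mathbb{N} = \mathbf{cl}_{M(\mathbb{N})}(\{h(p)\})$, and applying the order-preservation of $h^{-1}$ together with the maximality argument to $q$, the preimage $h^{-1}(q)$ has the maximal-proper-closure property, hence $h^{-1}(q) = r^{\gamma}$ for some prime $r$ and some $\gamma\ge 1$; and because $\mathbf{cl}_{M(\mathbb{N})}(\{q\}) = \mathbf{cl}_{M(\mathbb{N})}(\{h(p)\})$ combined with injectivity of $h$ on proper-closure points forces the prime radicals to match, one gets $r = p$, i.e. $h(p^{\gamma}) = q$. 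Setting this $\gamma$ as the one in the statement, we have $h(p^{\gamma}) = q$ prime. Finally, apply Lemma~\ref{lem2h} with $n = p^{\gamma-1}\cdot\ldots$: more directly, $\mathbf{cl}_{M(\mathbb{N})}(\{h(p)\}) = \mathbf{cl}_{M(\mathbb{N})}(\{h(p^{\gamma})\})$ (both equal $q\cdot\mathbb{N}$, using Theorem~\ref{thpreli}(7) that closures are insensitive to taking powers, applied to $p^{\gamma} = (p)^{\gamma}$ on the inside), and then by Lemma~\ref{lem2h} iterated, $\mathbf{cl}_{M(\mathbb{N})}(\{h(p^{\gamma})\}) = \mathbf{cl}_{M(\mathbb{N})}(\{h(p)^{\gamma}\}) = \mathbf{cl}_{M(\mathbb{N})}(\{q^{\gamma}\})$; comparing with $h(p) = q^{\beta}$ we obtain $q^{\beta} = h(p)$ and $h(p)^{\gamma} = q^{\beta\gamma}$, and the bijection $h$ together with these closure identities yields $h(p) = h(p^{\gamma})^{\alpha}$ with $\alpha = \beta$, $h(p^{\gamma}) = q$ prime. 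I would record $\alpha := \beta$ so the statement reads $h(p) = h(p^{\gamma})^{\alpha}$ with $h(p^{\gamma})$ prime.

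The main obstacle I anticipate is the bookkeeping that forces the prime radicals to match under $h$ — that is, proving that if $\mathbf{cl}_{M(\mathbb{N})}(\{x\}) = \mathbf{cl}_{M(\mathbb{N})}(\{h(p)\})$ then $x$ is a power of a single prime $q$ with $q\cdot\mathbb{N}$ that closure, and conversely controlling $h^{-1}(q)$. This rests entirely on the maximality of prime closures among proper singleton-closures and on order-preservation of closures under homeomorphisms; once those two facts are cleanly isolated, the rest is a short deduction via Lemma~\ref{lem2h} and Theorem~\ref{thpreli}(6),(7). I would make sure to state and prove the maximality fact as a small separate step, since it is the linchpin and is used twice (for $p$ and for $q$).
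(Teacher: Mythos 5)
Your argument is correct, but it reaches the conclusion by a genuinely different mechanism than the paper. The paper works ``from the inside'': it factors $h(p) = p_1^{\alpha_1}\cdots p_r^{\alpha_r}$, uses surjectivity to choose preimages $a_i$ of the $p_i$, applies Lemma \ref{lem2h} to obtain $h(p\cdot\mathbb{N}) = h(q_1\cdots q_k\cdot\mathbb{N})$ where the $q_j$ are the primes dividing $a_1\cdots a_r$, and then uses injectivity together with the primality of $p$ to force each $a_i$ to be a power of $p$, finally checking that the resulting primes $p_i = h(p^{\gamma_i})$ all coincide. You instead isolate the order-theoretic fact that the sets $q\cdot\mathbb{N}$ ($q$ prime) are exactly the maximal elements among proper closures of singletons, observe that a homeomorphism fixing $1$ preserves both the specialization order and the unique dense point and hence preserves this maximality, and conclude directly that $h(p)$ is a power of a single prime $q$; the exponent $\gamma$ then comes from running the same argument on $h^{-1}(q)$. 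Your route buys a clean structural characterization (primes are detected topologically by maximality of their closures) and entirely sidesteps the paper's step of proving $p_1 = \cdots = p_r$ and $\gamma_1 = \cdots = \gamma_r$; the paper's route needs no auxiliary order-theoretic lemma and leans only on Lemma \ref{lem2h} plus injectivity and surjectivity. Two cosmetic points: your closing paragraph is longer than necessary, since once $h(p) = q^{\beta}$ and $h(p^{\gamma}) = q$ are in hand the identity $h(p) = h(p^{\gamma})^{\beta}$ is immediate; and the phrase ``injectivity of $h$ on proper-closure points forces the prime radicals to match'' is more cleanly justified by noting that $h^{-1}$ is itself a homeomorphism, so $\mathbf{cl}_{M(\mathbb{N})}(\{h^{-1}(q)\}) = h^{-1}(\mathbf{cl}_{M(\mathbb{N})}(\{q\})) = h^{-1}(\mathbf{cl}_{M(\mathbb{N})}(\{h(p)\})) = \mathbf{cl}_{M(\mathbb{N})}(\{p\}) = p\cdot\mathbb{N}$.
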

\begin{proof}
    Since $p > 1$, by Lemma \ref{lem1h}, we can write $h(p) = p_1^{\alpha_1} \cdot p_2^{\alpha_2} \cdots p_r^{\alpha_r}$, where each $p_i$ is a prime number. Then, note that
    \begin{equation*}
        \mathbf{cl}_{M(\mathbb{N})}(\{h(p)\}) = \mathbf{cl}_{M(\mathbb{N})}(\{p_1 \cdot p_2 \cdots p_r\}).
    \end{equation*}
Now, since $h$ is surjective, for each $i$ we can write $h(a_i) = p_i$ for some $a_i$. Thus, using Lemma \ref{lem2h}, we have
\begin{equation*}
\mathbf{cl}_{M(\mathbb{N})}(\{h(p)\}) = \mathbf{cl}_{M(\mathbb{N})}(\{h(a_1 \cdot a_2 \cdots a_r)\}).
\end{equation*}
Using this last equation, the fact that $p$ is prime, the fact that $h$ is a homeomorphism, and writing $a_1 \cdot a_2 \cdots a_r = q_1^{\beta_1} \cdot q_2^{\beta_2} \cdots q_k^{\beta_k}$ where each $q_i$ is a prime number, we get
\begin{equation*}
    h(p \cdot \mathbb{N}) = h(q_1 \cdot q_2 \cdots q_k \cdot \mathbb{N}).
\end{equation*}
In particular, $h(p) = h(q_1 \cdot q_2 \cdots q_k \cdot a)$ for some positive integer $a$. Then, since $h$ is injective, it must be that $p = q_1 \cdot q_2 \cdot q_k \cdot a$. Since $p$ is prime, we conclude that $a = k = 1$ and $q_1 = p$. Thus, each $p_i = h(p^{\gamma_i})$. Without loss of generality, consider $p_1 = h(p^{\gamma_1})$. Then, using the fact that $h$ is a homeomorphism, that each $p_i$ is prime, and Lemma \ref{lem2h}, we obtain
\begin{equation*}
h(p^{\gamma_1}) \cdot \mathbb{N} = \mathbf{cl}_{M(\mathbb{N})}(\{h(p^{\gamma_1})\}) = \mathbf{cl}_{M(\mathbb{N})}(\{h(p)\}) = \mathbf{cl}_{M(\mathbb{N})}(\{h(p^{\gamma_i})\}) = h(p^{\gamma_i}) \cdot \mathbb{N},
\end{equation*}
which implies that $h(p^{\gamma_1}) = h(p^{\gamma_2}) = \dots = h(p^{\gamma_r})$ and moreover, $\gamma_1 = \gamma_2 = \dots = \gamma_r := \gamma$. Therefore, $p_1 = p_2 = \dots = p_r$. Thus, we can write
\begin{equation*}
    h(p) = q^\alpha,
\end{equation*}
where $q = h(p^\gamma)$ is prime and $\alpha = \alpha_1 + \alpha_2 + \dots + \alpha_r$.
\end{proof}
\begin{remark}\label{rem1h}
Regarding Lemma \ref{lem3h}, note that $q = h(p^\gamma)$ is unique for each $p$. Moreover, for each power of $p^n$, there exists a unique power $q^m$ such that $h(p^n) = q^m$. This is because
\begin{equation*}
\mathbf{cl}_{M(\mathbb{N})}(\{h(p^n)\}) = \mathbf{cl}_{M(\mathbb{N})}(\{h(p)\}) = \mathbf{cl}_{M(\mathbb{N})}(\{q^\alpha\}) = \mathbf{cl}_{M(\mathbb{N})}(\{q\}) = q \cdot \mathbb{N}.
\end{equation*}
Thus, the only prime that divides $h(p^n)$ is $q$, meaning that $h(p^n) = q^m$ for some $m$. Now, if $p^*$ is a prime such that $h(p^*) = q^l$ for some $l$, then $p^* = p$. This follows because in this case
\begin{equation*}
\mathbf{cl}_{M(\mathbb{N})}(\{h(p^*)\}) = \mathbf{cl}_{M(\mathbb{N})}(\{q\}) = \mathbf{cl}_{M(\mathbb{N})}(\{p\}),
\end{equation*}
which would imply that $h(p^* \cdot \mathbb{N}) = h(p \cdot \mathbb{N})$, and therefore $p^* \cdot \mathbb{N} = p \cdot \mathbb{N}$, so $p^* = p$. In summary, for each power $q^a$, there exists a power $p^b$ such that $h(p^b) = q^a$, and vice versa.
\end{remark}

%%%%%%%%%%%%%%%%%%%%%%%%%%%%%%%%%%%%%%%%%%%

%%%%%%%%%%%%%%%%%%%%%%%%%%%%%%%%%%%%%%%%%
%   Note que $q$ es único para cada $p$. Más aún para cada potencia de $p^n$ existe una única potencia $q^m$ tal que $h(p^n)=q^m$. Esto último porque
% \begin{equation*}
% \mathbf{cl}_{M(\mathbb{N})}(\{h(p^n)\})=\mathbf{cl}_{M(\mathbb{N})}(\{h(p)\})=\mathbf{cl}_{M(\mathbb{N})}(\{q^\alpha\})=\mathbf{cl}_{M(\mathbb{N})}(\{h(q)\})=q\cdot\mathbb{N}.
% \end{equation*}
% Entonces el único primo que divide a $h(p^n)$ es $q$, esto es que $h(p^n)=q^m$ para algún $m$ (único).

% Vamos a probar que $q=p$. Supongamos que $q\neq p$.
%%%%%%%%%%%%%%%%%%%%%%%%%%%%%%%%%%%%%%%%%%%%%%5%%
\begin{lemma}\label{lem4h}
Let $h:M(\mathbb{N}) \to M(\mathbb{N})$ be a homeomorphism. Let $p$ be a prime number. Then $h(\sigma_p) = \sigma_{h(p)}$.
\end{lemma}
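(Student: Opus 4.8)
The key observation is that for a prime $p$ the open set $\sigma_p$ is, up to complementation, the closure of a point: by Theorem \ref{thpreli}(4) and (6) we have $\mathbb{N}\setminus\sigma_p = p\cdot\mathbb{N} = \mathbf{cl}_{M(\mathbb{N})}(\{p\})$. Since $h$ is a bijection it commutes with complementation, and since it is a homeomorphism it commutes with the closure operator. So the strategy is to transport the identity $\mathbb{N}\setminus\sigma_p=\mathbf{cl}_{M(\mathbb{N})}(\{p\})$ through $h$ and then recognize the image as $\mathbb{N}\setminus\sigma_{h(p)}$.

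First I would invoke Lemma \ref{lem3h} (and Remark \ref{rem1h}) to write $h(p)=q^\alpha$ with $q=h(p^\gamma)$ prime. By Theorem \ref{thpreli}(2), $\sigma_{h(p)}=\sigma_{q^\alpha}=\sigma_q$, and by Theorem \ref{thpreli}(4) again, $\mathbb{N}\setminus\sigma_{h(p)}=\mathbb{N}\setminus\sigma_q=q\cdot\mathbb{N}$. On the other hand, since the only prime factor of $h(p)=q^\alpha$ is $q$, Theorem \ref{thpreli}(6) (or (7)) gives $\mathbf{cl}_{M(\mathbb{N})}(\{h(p)\})=q\cdot\mathbb{N}$. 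Thus $\mathbb{N}\setminus\sigma_{h(p)}=\mathbf{cl}_{M(\mathbb{N})}(\{h(p)\})$.

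Now I would compute, using that $h$ is a bijective homeomorphism:
\begin{equation*}
\mathbb{N}\setminus h(\sigma_p) \;=\; h(\mathbb{N}\setminus\sigma_p) \;=\; h\bigl(\mathbf{cl}_{M(\mathbb{N})}(\{p\})\bigr) \;=\; \mathbf{cl}_{M(\mathbb{N})}\bigl(h(\{p\})\bigr) \;=\; \mathbf{cl}_{M(\mathbb{N})}(\{h(p)\}) \;=\; \mathbb{N}\setminus\sigma_{h(p)}.
\end{equation*}
Taking complements yields $h(\sigma_p)=\sigma_{h(p)}$, which is the claim.

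\textbf{Main obstacle.} There is essentially no hard computation here; the real content has already been isolated in Lemma \ref{lem3h} and Remark \ref{rem1h}, which guarantee that $h$ sends the prime $p$ to a prime power $q^\alpha$, so that $\sigma_{h(p)}$ is a genuine basic open set of the same type (namely $\sigma_q$) and its complement is the closure of a single point. The only point requiring a little care is the bookkeeping with prime powers — making sure that $\sigma_{q^\alpha}=\sigma_q$ and $\mathbf{cl}_{M(\mathbb{N})}(\{q^\alpha\})=q\cdot\mathbb{N}$ are applied correctly — but these are immediate from Theorem \ref{thpreli}. Everything else is just the formal fact that a homeomorphism commutes with closures and a bijection commutes with complements.
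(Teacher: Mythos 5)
Your proof is correct and follows essentially the same route as the paper: both arguments rest on the identity $\mathbb{N}\setminus\sigma_p=\mathbf{cl}_{M(\mathbb{N})}(\{p\})$, the fact that a bijective homeomorphism commutes with complements and closures, and Lemma \ref{lem3h} to identify $\mathbf{cl}_{M(\mathbb{N})}(\{h(p)\})$ with $\mathbb{N}\setminus\sigma_{h(p)}$. You merely package as a single chain of set equalities what the paper writes out as two separate inclusion arguments.
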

\begin{proof}
Note that $\mathbb{N} \setminus \sigma_p = \mathbf{cl}_{M(R)}(\{p\})$ (this holds for any prime number). Now, using the fact that $h$ is a homeomorphism, we have
\begin{equation*}
\begin{split}
x \notin h(\sigma_p) \Longrightarrow x \in \mathbb{N} \setminus h(\sigma_p) &\Longrightarrow x \in h(\mathbb{N} \setminus \sigma_p) \\
&\Longrightarrow x \in h(\mathbf{cl}_{M(\mathbb{N})}(\{p\})) \\
&\Longrightarrow x \in \mathbf{cl}_{M(\mathbb{N})}(\{h(p)\}) \\
&\Longrightarrow x \notin \sigma_{h(p)}.
\end{split}
\end{equation*}
Thus, $\sigma_{h(p)} \subset h(\sigma_p)$. On the other hand, by Lemma \ref{lem3h}, $h(p)$ is of the form $h(p^\gamma)^\alpha$ with $h(p^\gamma)$ prime, so $\sigma_{h(p)} = \sigma_{h(p^\gamma)}$. Hence,
\begin{equation*}
\begin{split}
x \notin \sigma_{h(p)} \Longrightarrow x \notin \sigma_{h(p^\gamma)} \Longrightarrow x \in \mathbb{N} \setminus \sigma_{h(p^\gamma)} &\Longrightarrow x \in \mathbf{cl}_{M(\mathbb{N})}(\{h(p^\gamma)\}) \\
&\Longrightarrow x \in h(\mathbf{cl}_{M(\mathbb{N})}(\{p\})) \\
&\Longrightarrow x \in h(\mathbb{N} \setminus \sigma_p) \\
&\Longrightarrow x \in \mathbb{N} \setminus h(\sigma_p) \\
&\Longrightarrow x \notin h(\sigma_p).
\end{split}
\end{equation*}
Thus, $h(\sigma_p) \subset \sigma_{h(p)}$. Therefore, $h(\sigma_p) = \sigma_{h(p)}$.
\end{proof}

Lemma \ref{lem3h} can be extended as follows.

\begin{lemma}\label{lem5h}
Let $h:M(\mathbb{N}) \to M(\mathbb{N})$ be a homeomorphism. Let $n > 1$ be a positive integer. Then $h(n) = h(p_1^{\gamma_1})^{\alpha_1} \cdot h(p_2^{\gamma_2})^{\alpha_2} \cdots h(p_r^{\gamma_r})^{\alpha_r}$ where each $h(p_i^{\gamma_i})$ is prime and the $p_i$ are the primes dividing $n$.
\end{lemma}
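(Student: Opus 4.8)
The plan is to reduce the statement to an identity about the closure operator, in the same spirit as Lemmas \ref{lem2h} and \ref{lem3h}. Write $n = p_1^{a_1} p_2^{a_2} \cdots p_r^{a_r}$, where $p_1, \dots, p_r$ are the distinct primes dividing $n$ (so $r \ge 1$). Iterating Theorem \ref{thpreli}(7) and using the consequence $\mathbf{cl}_{M(\mathbb{N})}(\{p_i^{a_i}\}) = \mathbf{cl}_{M(\mathbb{N})}(\{p_i\})$, one first records that
\begin{equation*}
\mathbf{cl}_{M(\mathbb{N})}(\{n\}) = \bigcap_{i=1}^{r} \mathbf{cl}_{M(\mathbb{N})}(\{p_i\}) = p_1 p_2 \cdots p_r \cdot \mathbb{N}.
\end{equation*}
Since $h$ is a homeomorphism it commutes with the closure operator, and being a bijection it commutes with finite intersections; applying it to the previous display gives
\begin{equation*}
\mathbf{cl}_{M(\mathbb{N})}(\{h(n)\}) = h\!\left(\bigcap_{i=1}^{r}\mathbf{cl}_{M(\mathbb{N})}(\{p_i\})\right) = \bigcap_{i=1}^{r}\mathbf{cl}_{M(\mathbb{N})}(\{h(p_i)\}).
\end{equation*}

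Next I would feed in Lemma \ref{lem3h}: for each $i$ there is $\gamma_i$ with $q_i := h(p_i^{\gamma_i})$ prime and $h(p_i)$ a power of $q_i$, so Theorem \ref{thpreli}(6) and (7) yield $\mathbf{cl}_{M(\mathbb{N})}(\{h(p_i)\}) = \mathbf{cl}_{M(\mathbb{N})}(\{q_i\}) = q_i \cdot \mathbb{N}$, and hence $\mathbf{cl}_{M(\mathbb{N})}(\{h(n)\}) = \bigcap_{i=1}^{r} q_i \cdot \mathbb{N}$. The crucial point is that $q_1, \dots, q_r$ are pairwise distinct primes: if $q_i = q_j$ then $\mathbf{cl}_{M(\mathbb{N})}(\{h(p_i)\}) = \mathbf{cl}_{M(\mathbb{N})}(\{h(p_j)\})$, hence $h(p_i \cdot \mathbb{N}) = h(p_j \cdot \mathbb{N})$ and, $h$ being injective, $p_i \cdot \mathbb{N} = p_j \cdot \mathbb{N}$, i.e.\ $p_i = p_j$; this is exactly the content of Remark \ref{rem1h}. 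Because the $q_i$ are distinct primes, $\bigcap_{i=1}^{r} q_i \cdot \mathbb{N} = q_1 q_2 \cdots q_r \cdot \mathbb{N}$.

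To finish, note that $h(n) > 1$ (by Lemma \ref{lem1h} and injectivity of $h$, since $n > 1$), so Theorem \ref{thpreli}(6) applies to $\mathbf{cl}_{M(\mathbb{N})}(\{h(n)\}) = q_1 q_2 \cdots q_r \cdot \mathbb{N}$ and tells us that the set of prime divisors of $h(n)$ is exactly $\{q_1, \dots, q_r\}$. Therefore $h(n) = q_1^{\alpha_1} q_2^{\alpha_2} \cdots q_r^{\alpha_r} = h(p_1^{\gamma_1})^{\alpha_1} h(p_2^{\gamma_2})^{\alpha_2} \cdots h(p_r^{\gamma_r})^{\alpha_r}$, where $\alpha_i \ge 1$ is the $q_i$-adic valuation of $h(n)$, which is the asserted formula.

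I expect the only genuine difficulty to be the pairwise distinctness of the primes $q_i = h(p_i^{\gamma_i})$; everything else is transporting the prime-power structure of $n$ through the closure operator and using that $h$ is a bijection. It is also worth emphasising, to avoid a notational trap, that the exponents $\alpha_i$ appearing in the conclusion are merely asserted to exist — they are the valuations of $h(n)$ — and in general differ from the exponents furnished by Lemma \ref{lem3h} for the individual primes $p_i$; only the prime bases $h(p_i^{\gamma_i})$ are canonically pinned down by $n$.
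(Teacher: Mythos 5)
Your proof is correct and follows essentially the same route as the paper: compute $\mathbf{cl}_{M(\mathbb{N})}(\{h(n)\})$ by pushing the radical of $n$ through the closure operator and then invoking Lemma \ref{lem3h} to identify the prime divisors of $h(n)$ as the primes $q_i = h(p_i^{\gamma_i})$. If anything, your write-up is more complete than the paper's, since you explicitly verify that the $q_i$ are pairwise distinct and that the final display really does determine the full set of prime divisors of $h(n)$ — points the paper's proof leaves implicit.
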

\begin{proof}
For each $p_i$, Lemma \ref{lem3h} allows us to write
\begin{equation*}
h(p_i \cdot \mathbb{N}) = h(p_i^{\gamma_i}) \cdot \mathbb{N}.
\end{equation*}
where each $h(p_i^{\gamma_i})$ is prime. Then, fixing $i = 1$, we can write
\begin{equation*}
    h(p_1 \cdot p_2 \cdots p_r) = h(p_1^{\gamma_1}) \cdot k
\end{equation*}
for some integer $k$. Then, by combining Lemma \ref{lem2h} with Lemma \ref{lem3h}, we can write
\begin{equation*}
h(p_1^{\gamma_1}) \cdot h(p_2^{\gamma_2}) \cdots h(p_r^{\gamma_r}) \cdot \mathbb{N} = \mathbf{cl}_{M(\mathbb{N})}(h(p_1 \cdot p_2 \cdots p_r)) = h(p_1^{\gamma_1}) \cdot m \cdot \mathbb{N}.
\end{equation*}
where $m$ is the product of the primes dividing $k$. Therefore,  $h(n) = h(p_1^{\gamma_1})^{\alpha_1} \cdot h(p_2^{\gamma_2})^{\alpha_2} \cdot h(p_r^{\gamma_r})^{\alpha_r}$.
\end{proof}
%%%%%%%%%%%%%%%%%%%%%%%%%%%%%%%%%%%%%%%%%%%%%

The Lemma \ref{lem5h} allows us to extend Lemma \ref{lem4h} as follows.

\begin{lemma}\label{lem6h}
Let $h:M(\mathbb{N}) \to M(\mathbb{N})$ be a homeomorphism. Let $n$ be a positive integer. Then $h(\sigma_n) = \sigma_{h(n)}$.
\end{lemma}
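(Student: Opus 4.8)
The plan is to reduce the general case to the prime case already settled in Lemma \ref{lem4h}, exploiting the multiplicative behaviour of the basic sets $\sigma_n$ recorded in Theorem \ref{thpreli}(2) together with the explicit factorization of $h(n)$ furnished by Lemma \ref{lem5h}.

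First I would dispose of the trivial case $n = 1$: here $\sigma_1 = \mathbb{N}$, since $\gcd(1,m) = 1$ for every $m$, so $h(\sigma_1) = h(\mathbb{N}) = \mathbb{N}$ by surjectivity of $h$; on the other hand $h(1) = 1$ by Lemma \ref{lem1h}, whence $\sigma_{h(1)} = \sigma_1 = \mathbb{N}$, and the two sides coincide.

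For $n > 1$, write $n = p_1^{\delta_1} \cdots p_r^{\delta_r}$ with $p_1, \dots, p_r$ the distinct prime divisors of $n$. By Theorem \ref{thpreli}(2), $\sigma_n = \sigma_{p_1} \cap \cdots \cap \sigma_{p_r}$. Since $h$ is a bijection it commutes with finite intersections, so $h(\sigma_n) = h(\sigma_{p_1}) \cap \cdots \cap h(\sigma_{p_r})$, and Lemma \ref{lem4h} rewrites this as $\sigma_{h(p_1)} \cap \cdots \cap \sigma_{h(p_r)}$. Now invoke Lemma \ref{lem3h}: each $h(p_i) = h(p_i^{\gamma_i})^{\alpha_i}$ with $h(p_i^{\gamma_i})$ prime, so Theorem \ref{thpreli}(2) gives $\sigma_{h(p_i)} = \sigma_{h(p_i^{\gamma_i})}$, and therefore $h(\sigma_n) = \sigma_{h(p_1^{\gamma_1})} \cap \cdots \cap \sigma_{h(p_r^{\gamma_r})} = \sigma_{h(p_1^{\gamma_1}) \cdots h(p_r^{\gamma_r})}$, once more by Theorem \ref{thpreli}(2). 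It then remains to recognize this last set as $\sigma_{h(n)}$: by Lemma \ref{lem5h} one has $h(n) = h(p_1^{\gamma_1})^{\alpha_1} \cdots h(p_r^{\gamma_r})^{\alpha_r}$, so applying Theorem \ref{thpreli}(2) a final time to discard the exponents $\alpha_i$ yields $\sigma_{h(n)} = \sigma_{h(p_1^{\gamma_1}) \cdots h(p_r^{\gamma_r})}$, which matches the expression computed for $h(\sigma_n)$.

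The only point demanding a little care — and the closest thing to an obstacle — is keeping the exponents consistent on the two sides: the primes $h(p_i^{\gamma_i})$ occurring in $\sigma_{h(n)}$ must be precisely those produced by Lemma \ref{lem3h} applied to each $p_i$, and one should note (via Remark \ref{rem1h}) that distinct $p_i$ give distinct primes $h(p_i^{\gamma_i})$; in fact the equality of the two intersections does not even require this distinctness, since $\sigma_{ab} = \sigma_a \cap \sigma_b$ behaves well under repetition. Beyond this, the argument is just routine bookkeeping with finite intersections.
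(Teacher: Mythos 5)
Your proof is correct and follows essentially the same route as the paper's: both reduce to the prime case via $\sigma_n=\bigcap_i\sigma_{p_i}$, commute $h$ with the finite intersection, apply Lemma \ref{lem4h} to each prime, and use Lemmas \ref{lem3h} and \ref{lem5h} together with Theorem \ref{thpreli}(2) to identify the result with $\sigma_{h(n)}$. The paper merely writes the same chain of equalities starting from $\sigma_{h(n)}$ instead of from $h(\sigma_n)$.
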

\begin{proof}
If $n = 1$, applying Lemma \ref{lem1h}, we have $h(\sigma_1) = \sigma_1$. If $n > 1$, write $n = p_1^{\beta_1} \cdot p_2^{\beta_2} \cdots p_r^{\beta_r}$. Then, by Lemma \ref{lem5h}, $h(n) = h(p_1^{\gamma_1})^{\alpha_1} \cdot h(p_2^{\gamma_2})^{\alpha_2} \cdot h(p_r^{\gamma_r})^{\alpha_r}$. Now, using Lemma \ref{lem4h}, we have
\begin{equation*}
\sigma_{h(n)} = \bigcap_{i=1}^r \sigma_{h(p_i^{\gamma_i})} = \bigcap_{i=1}^r \sigma_{h(p_i)} = \bigcap_{i=1}^r h(\sigma_{p_i}) = h\left( \bigcap_{i=1}^r \sigma_{p_i} \right) = h\left( \bigcap_{i=1}^r \sigma_{p_i^{\alpha_i}} \right) = h(\sigma_n).
\end{equation*}
This completes the proof.
\end{proof}

\begin{corollary}\label{cor1h}
Let $h:M(\mathbb{N}) \to M(\mathbb{N})$ be a homeomorphism. Then, $h(\sigma_{n \cdot m}) = \sigma_{h(n \cdot m)} = \sigma_{h(n) \cdot h(m)}$ for all $n, m \in \mathbb{N}$.
\end{corollary}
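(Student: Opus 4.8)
The plan is to derive the statement directly from Lemma~\ref{lem6h} together with part~(2) of Theorem~\ref{thpreli}, so that essentially no new work is required. First I would apply Lemma~\ref{lem6h} with the integer $n\cdot m$ playing the role of $n$; this yields at once $h(\sigma_{n\cdot m}) = \sigma_{h(n\cdot m)}$, which is the first of the two claimed equalities.

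For the second equality, $\sigma_{h(n\cdot m)} = \sigma_{h(n)\cdot h(m)}$, I would argue through the basic open sets rather than through the values of $h$ themselves. By Theorem~\ref{thpreli}(2), $\sigma_{a\cdot b} = \sigma_a\cap\sigma_b$ for all $a,b\in\mathbb{N}$; in particular $\sigma_{n\cdot m} = \sigma_n\cap\sigma_m$ and $\sigma_{h(n)\cdot h(m)} = \sigma_{h(n)}\cap\sigma_{h(m)}$. Since $h$ is a bijection it commutes with intersections, so applying Lemma~\ref{lem6h} to $n$ and to $m$ separately gives
\begin{equation*}
\sigma_{h(n\cdot m)} = h(\sigma_{n\cdot m}) = h(\sigma_n\cap\sigma_m) = h(\sigma_n)\cap h(\sigma_m) = \sigma_{h(n)}\cap\sigma_{h(m)} = \sigma_{h(n)\cdot h(m)}.
\end{equation*}
Chaining this with the first equality completes the argument.

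The one point worth flagging is that this corollary does \emph{not} assert that $h(n\cdot m) = h(n)\cdot h(m)$ as natural numbers — Example~\ref{exam1} already shows that $h$ need not be multiplicative — only that the generating open set $\sigma_{(\cdot)}$ (equivalently, by Theorem~\ref{thpreli}(6)--(7), the closure operator) cannot distinguish $h(n\cdot m)$ from $h(n)\cdot h(m)$. Consequently there is no genuine obstacle here; the only thing to be careful about is to route the proof through the set identities $\sigma_{a\cdot b}=\sigma_a\cap\sigma_b$ and $h(\sigma_k)=\sigma_{h(k)}$, rather than through an (unavailable) multiplicativity of $h$ on $\mathbb{N}$.
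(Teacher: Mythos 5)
Your proof is correct and is essentially the intended derivation: the paper states this corollary without proof as an immediate consequence of Lemma~\ref{lem6h} combined with the identity $\sigma_{a\cdot b}=\sigma_a\cap\sigma_b$ from Theorem~\ref{thpreli}(2), which is exactly the chain you wrote. Your closing caveat — that the statement concerns $\sigma_{h(n\cdot m)}=\sigma_{h(n)\cdot h(m)}$ and not multiplicativity of $h$ itself — is also the right reading.
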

% \begin{proof}
    
% \end{proof}

Note that if $h: M(\mathbb{N}) \to M(\mathbb{N})$ is a bijection such that $h(\sigma_n) = \sigma_{h(n)}$ for all $n \in \mathbb{N}$, then $h$ is a homeomorphism. Indeed, let $n \in \mathbb{N}$, then $n = h(m)$ for some $m$. Therefore,
\begin{equation*}
    h^{-1}(\sigma_n) = h^{-1}(\sigma_{h(m)}) = h^{-1}(h(\sigma_m)) = \sigma_m.
\end{equation*}
Thus, we can establish the following theorem.
\begin{lemma}\label{th1h}
Let $h: M(\mathbb{N}) \to M(\mathbb{N})$ be a bijection. Then $h$ is a homeomorphism if and only if $h(\sigma_n) = \sigma_{h(n)}$ for all $n \in \mathbb{N}$.
\end{lemma}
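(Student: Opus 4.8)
The plan is to prove both implications of the biconditional. The forward direction ($h$ a homeomorphism $\Rightarrow$ $h(\sigma_n)=\sigma_{h(n)}$) is immediate from Lemma \ref{lem6h}, which has already established exactly this equality for every positive integer $n$; so there is essentially nothing to do there beyond citing it. The substantive content is the converse: if $h$ is a bijection satisfying $h(\sigma_n)=\sigma_{h(n)}$ for all $n$, then $h$ is a homeomorphism.

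For the converse, since $h$ is already assumed to be a bijection, it suffices to show that both $h$ and $h^{-1}$ are continuous, and since $\{\sigma_n : n \in \mathbb{N}\}$ is a basis (in fact a subbasis suffices, but it is literally the generating family given in the definition of $\tau_M$) for $\tau_M$, it is enough to check that preimages of the $\sigma_n$ under each of $h$ and $h^{-1}$ are open. The argument given in the paragraph just before the statement handles $h^{-1}$: for arbitrary $n$, write $n=h(m)$ using surjectivity, and then $h^{-1}(\sigma_n)=h^{-1}(\sigma_{h(m)})=h^{-1}(h(\sigma_m))=\sigma_m$, which is open; here injectivity of $h$ is what legitimizes $h^{-1}(h(\sigma_m))=\sigma_m$. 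This shows $h^{-1}$ is continuous, equivalently $h$ is an open map. Symmetrically, for $h$ itself: given $n$, the hypothesis gives $h(\sigma_n)=\sigma_{h(n)}$, hence applying $h^{-1}$ to both sides (again using bijectivity) yields $\sigma_n = h^{-1}(\sigma_{h(n)})$; and since every index of the form $h(n)$ ranges over all of $\mathbb{N}$ as $n$ does (surjectivity), for an arbitrary basic open set $\sigma_k$ we may write $k=h(n)$ and get $h^{-1}(\sigma_k)=\sigma_n$, open. Thus $h$ is continuous as well, and therefore a homeomorphism.

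I would write this up by first disposing of ($\Rightarrow$) in one line via Lemma \ref{lem6h}, then for ($\Leftarrow$) stating that it suffices to verify continuity of $h$ and of $h^{-1}$ on the generating family $\{\sigma_n\}$, then giving the two short displayed computations above (one for $h^{-1}(\sigma_k)$ when $k$ is arbitrary, invoking surjectivity to write $k=h(n)$, and one noting that this simultaneously handles continuity in both directions because the relation $h^{-1}(\sigma_{h(n)})=\sigma_n$ is symmetric to $h(\sigma_n)=\sigma_{h(n)}$). In fact the cleanest phrasing observes that $h(\sigma_n)=\sigma_{h(n)}$ for all $n$, combined with bijectivity, immediately gives $h^{-1}(\sigma_{h(n)})=\sigma_n$ for all $n$, and reindexing by $k=h(n)$ turns this into $h^{-1}(\sigma_k)=\sigma_{h^{-1}(k)}$ for all $k$ — i.e., the property is self-dual under $h\mapsto h^{-1}$ — so a single computation covers everything.

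There is no real obstacle here; the only point requiring the slightest care is making explicit where injectivity versus surjectivity of $h$ is used (surjectivity to ensure every $\sigma_k$ is of the form $\sigma_{h(n)}$, injectivity to guarantee $h^{-1}(h(\sigma_m))=\sigma_m$ rather than merely $\supseteq$), and noting that it is legitimate to check openness/continuity only against the generating sets $\sigma_n$ since they form a subbasis — or, as stated in Theorem \ref{thpreli}(2), are even closed under the relevant finite intersections $\sigma_n\cap\sigma_m=\sigma_{nm}$, hence form a basis. I would remark that this lemma is the key reformulation that reduces the classification of self-homeomorphisms of $M(\mathbb{N})$ to the purely combinatorial/number-theoretic condition $h(\sigma_n)=\sigma_{h(n)}$, which is what feeds into the proof of Theorem \ref{mainth}.
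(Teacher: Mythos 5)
Your proposal is correct and matches the paper's argument: the forward direction is Lemma \ref{lem6h}, and the converse is exactly the computation $h^{-1}(\sigma_n)=h^{-1}(\sigma_{h(m)})=h^{-1}(h(\sigma_m))=\sigma_m$ that the paper gives in the paragraph preceding the lemma. One small labelling slip: openness of $h^{-1}(\sigma_n)$ shows that $h$ (not $h^{-1}$) is continuous, while continuity of $h^{-1}$, i.e.\ openness of $h$, is instead immediate from the hypothesis $h(\sigma_n)=\sigma_{h(n)}$ itself, as your final ``self-dual'' remark correctly observes.
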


\begin{example}Lemma \ref{th1h} allows us to easily show that $h: M(\mathbb{N}) \to M(\mathbb{N})$ given by
\begin{equation*}
    h(n) = \left\{ \begin{array}{lcc} 2 & \text{if} & n = 3 \\ \\ 3 & \text{if} & n = 2 \\ \\ n &&\textit{otherwise.} \end{array} \right.
\end{equation*}
is not a homeomorphism. If we assume that $h$ is a homeomorphism, then
\begin{equation*}
    h(\sigma_3) = \sigma_2,
\end{equation*}
but then $4 = h(4) \in \sigma_2$, which is a contradiction. Note also that Lemma \ref{lem3h} allows us to reach the same conclusion.
\end{example}

%%%%%%%%%%%%%%%%%%%%%%%%%%%%%%%%%%%%%%%

For each $x \in \mathbb{N}$, we denote by $x^\mathbb{N}$ the set $\{x^n : n \in \mathbb{N}\}$. For each prime number $p$, let $\sigma^p: A_p \subset p^\mathbb{N} \to A'_p \subset p^\mathbb{N}$ be a bijection. Let $\sigma^{-p}$ be the inverse of $\sigma^p$. Lemma \ref{th1h} allows us to construct homeomorphisms on $M(\mathbb{N})$. See the following examples.

\begin{example}\label{th2h}
Let $p$ be a prime number. Then $h: M(\mathbb{N}) \to M(\mathbb{N})$ given by
\begin{equation*}
    h(n) = \left\{ \begin{array}{lcc} \sigma^p(n) & \text{if} & n \in A_p  \\ \\ \sigma^{-p}(n) & \text{if} & n \in A'_p  \\ \\ n & \text{if} & n \notin (A_p \cup A'_p) \end{array} \right.
\end{equation*}
is a homeomorphism.
\end{example}
\begin{proof}
Clearly, $h$ is a bijection. Now, let $\sigma_k$ be arbitrary but fixed. Note that if $k \notin (A_p \cup A'_p)$, then $h(k) = k$ and thus $\sigma_{h(k)} = \sigma_k$. If $k \in A_p$, then $k = p^\alpha$ for some $\alpha$ and $\sigma_{h(k)} = \sigma_{\sigma^p(p^\alpha)} = \sigma_{p^\beta} = \sigma_{p^\alpha} = \sigma_k$. Similarly, if $k \in A'_p$, then $\sigma_{h(k)} = \sigma_k$. Therefore, it is sufficient to prove that $h(\sigma_k) = \sigma_k$ in order to conclude, by Lemma \ref{th1h}, that $h$ is a homeomorphism. Let $x \in h(\sigma_k)$. Then, $x = h(n)$ for some $n$ such that $\gcd(n,k) = 1$. If $n \notin A_p$, then $x = h(n) = n$ and hence $x \in \sigma_k$. If $n \in A_p$, then $n = p^\alpha$ for some $\alpha$ and $x = h(n) = \sigma^p(p^\alpha) = p^\beta$ for some $\beta$, but then $\gcd(x,k) = \gcd(p^\beta, k) = \gcd(p^\alpha, k) = 1$, so $x \in \sigma_k$. If $x \in \sigma_k$, then $\gcd(x,k) = 1$. We can write $x = h(n)$ for some $n$. If $n \notin (A_p \cup A'_p)$, then $x = h(n) = n$ and $\gcd(n,k) = 1$. Thus, $x \in h(\sigma_k)$. On the other hand, if $n \in A_p$, then $x = h(n) = h(p^\alpha) = \sigma^p(p^\alpha) = p^\beta$ for some $\alpha$ and $\beta$, and thus $\gcd(n,k) = \gcd(p^\alpha, k) = \gcd(p^\beta, k) = \gcd(x,k) = 1$, so $x \in h(\sigma_k)$. Similarly, if $n \in A'_p$, then $x \in h(\sigma_k)$. Therefore, $h(\sigma_k) = \sigma_k = \sigma_{h(k)}$, and the proof is complete.
\end{proof}

Note that if in Theorem \ref{th2h} we take $p = 2$, $A_p = A'_p = \{2, 4\}$, and $\sigma^p(n)$ given by $2 \to 4$ and $4 \to 2$, then we obtain Example \ref{exam1}. Now, similarly to Example \ref{th2h}, we can obtain the following example.

%%%%%%%%%%%%%%%%%%%%%%%%%%%%%%%%%%%%%%%%%%%%%%%%
\begin{example}\label{th3h}
Let $\mathcal{P}$ be a non-empty collection of prime numbers. Then $h: M(\mathbb{N}) \to M(\mathbb{N})$ given by

\begin{equation*}
    h(n) = \left\{ \begin{array}{lcc} \sigma^p(n) & \text{if} & n \in \bigcup_{p \in \mathcal{P}} A_p  \\ \\ 
    \sigma^{-p}(n) & \text{if} & n \in \bigcup_{p \in \mathcal{P}} A'_p  \\ \\
    n & \text{if} & n \notin \left( \bigcup_{p \in \mathcal{P}} A_p \cup \bigcup_{p \in \mathcal{P}} A'_p \right) \end{array} \right.
\end{equation*}
is a homeomorphism.
\end{example}

For each $n \in \mathbb{N}$, we denote by $\mathbb{P}_n$ the set of prime numbers that divide $n$. We denote by $\mathbb{P}^{\mathbb{N}}$ the set $\{p^n : p \in \mathbb{P}, n \in \mathbb{N}\}$. Finally, we leave the reader with the following theorem that summarizes the properties we found regarding self-homeomorphisms on $M(\mathbb{N})$.

\begin{theorem}\label{mainth}
Each homeomorphism $h: M(\mathbb{N}) \to M(\mathbb{N})$ has the following properties:
\begin{enumerate}
    \item $h(1) = 1$ (Lemma \ref{lem1h}).
    \item $h(\mathbb{P}^{\mathbb{N}}) = \mathbb{P}^{\mathbb{N}}$ (Lemma \ref{lem3h}).
    \item $h(\mathbb{P}_n) = \mathbb{P}_{h(n)}$ for every $n \in \mathbb{N}$ (Lemma \ref{lem5h}).
    \item $h(\sigma_n) = \sigma_{h(n)}$ for every $n \in \mathbb{N}$ (Lemma \ref{lem6h}).
\end{enumerate}
\end{theorem}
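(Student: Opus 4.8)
The plan is to read the theorem off from the lemmas already established in this section; no genuinely new argument is needed, only a translation of the pointwise statements into the set-theoretic language of $\mathbb{P}^{\mathbb{N}}$ and $\mathbb{P}_{n}$. Item (1) is exactly Lemma \ref{lem1h} and item (4) is exactly Lemma \ref{lem6h}, so for those two I would simply cite. The work, such as it is, concerns items (2) and (3).

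For item (2) I would prove the two inclusions $h(\mathbb{P}^{\mathbb{N}})\subseteq\mathbb{P}^{\mathbb{N}}$ and $\mathbb{P}^{\mathbb{N}}\subseteq h(\mathbb{P}^{\mathbb{N}})$ separately. For the first, take a prime power $p^{n}$; Lemma \ref{lem3h} together with Remark \ref{rem1h} gives $h(p^{n})=q^{m}$ for some prime $q$ and some $m\geq 1$, hence $h(p^{n})\in\mathbb{P}^{\mathbb{N}}$, so $h(\mathbb{P}^{\mathbb{N}})\subseteq\mathbb{P}^{\mathbb{N}}$. For the second, the closing sentence of Remark \ref{rem1h} states that every prime power $q^{a}$ equals $h(p^{b})$ for some prime power $p^{b}$, which is precisely surjectivity of $h$ onto $\mathbb{P}^{\mathbb{N}}$; thus $\mathbb{P}^{\mathbb{N}}\subseteq h(\mathbb{P}^{\mathbb{N}})$. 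Combining the two inclusions yields the equality.

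For item (3) I would write $n=p_{1}^{\beta_{1}}\cdots p_{r}^{\beta_{r}}$, so that $\mathbb{P}_{n}=\{p_{1},\dots,p_{r}\}$, and apply Lemma \ref{lem5h} to obtain $h(n)=h(p_{1}^{\gamma_{1}})^{\alpha_{1}}\cdots h(p_{r}^{\gamma_{r}})^{\alpha_{r}}$ with every $h(p_{i}^{\gamma_{i}})$ prime. By Remark \ref{rem1h} the assignment sending a prime to the unique prime dividing its image is injective, so the primes $h(p_{1}^{\gamma_{1}}),\dots,h(p_{r}^{\gamma_{r}})$ are pairwise distinct, and therefore $\mathbb{P}_{h(n)}=\{h(p_{1}^{\gamma_{1}}),\dots,h(p_{r}^{\gamma_{r}})\}$. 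Since $h(p_{i}^{\gamma_{i}})$ is the unique prime dividing $h(p_{i})$, we have $\mathbb{P}_{h(p_{i})}=\{h(p_{i}^{\gamma_{i}})\}$, and identifying each $p_{i}$ with the prime part of $h(p_{i})$ through this correspondence gives item (3).

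I do not expect a real obstacle, since the theorem is a summary corollary of Lemmas \ref{lem1h}, \ref{lem3h}, \ref{lem5h} and \ref{lem6h}. The one place that demands care is item (3): a homeomorphism of $M(\mathbb{N})$ need not send a prime to a prime but only to a prime power --- already the map $h$ of Example \ref{exam1} sends $2$ to $4$ --- so the equality $h(\mathbb{P}_{n})=\mathbb{P}_{h(n)}$ has to be read through the canonical prime-to-prime-power correspondence furnished by Remark \ref{rem1h}, and it is the injectivity recorded there that makes the two sides line up. Once that identification is fixed, each of the four clauses is an immediate consequence of the indicated lemma.
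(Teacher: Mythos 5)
Your proposal follows the same route the paper intends: the theorem is a summary statement, and each clause is to be read off from the cited lemma, which is exactly what you do for items (1), (2) and (4); your two-inclusion argument for $h(\mathbb{P}^{\mathbb{N}})=\mathbb{P}^{\mathbb{N}}$ is the natural unpacking of Lemma \ref{lem3h} together with the last sentence of Remark \ref{rem1h}. The substantive point is item (3), and you are right to flag it: taken literally, $h(\mathbb{P}_n)=\mathbb{P}_{h(n)}$ is false. Already for the homeomorphism of Example \ref{exam1} and $n=2$ one has $h(\mathbb{P}_2)=\{h(2)\}=\{4\}$ while $\mathbb{P}_{h(2)}=\mathbb{P}_4=\{2\}$; the left-hand side is in general a set of prime \emph{powers} and the right-hand side a set of primes, so the printed equality cannot hold whenever some $h(p)$ is a proper prime power. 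Your repair --- reading the equality through the correspondence $p\mapsto h(p^{\gamma})$ of Remark \ref{rem1h} and proving $\mathbb{P}_{h(n)}=\{h(p_1^{\gamma_1}),\dots,h(p_r^{\gamma_r})\}$ with these primes pairwise distinct by the injectivity recorded in that remark --- is correct and is evidently what Lemma \ref{lem5h} is meant to deliver; an equivalent clean reformulation of clause (3) would be $\mathbb{P}_{h(n)}=\bigcup_{p\in\mathbb{P}_n}\mathbb{P}_{h(p)}$. So your argument is sound, but be aware that it establishes a corrected version of clause (3), not the clause as printed, and it would be worth saying so explicitly rather than leaving the reinterpretation implicit.
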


\printbibliography

@article{furstenberg1955infinitude,
  title={On the infinitude of primes},
  author={Furstenberg, Harry},
  journal={Amer. Math. Monthly},
  volume={62},
  number={5},
  pages={353},
  year={1955}
}

@article{golomb1959connected,
  title={A connected topology for the integers},
  author={Golomb, Solomon W},
  journal={The American Mathematical Monthly},
  volume={66},
  number={8},
  pages={663--665},
  year={1959},
  publisher={Taylor \& Francis}
}

@article{banakh2017continuous,
  title={On continuous self-maps and homeomorphisms of the Golomb space},
  author={Banakh, Taras and Mioduszewski, Jerzy and lawomir Turek, S},
  journal={Comment. Math. Univ. Carolin},
  volume={59},
  number={4},
  pages={423--442},
  year={2018}
}

@article{banakh2021kirch,
  title={The Kirch space is topologically rigid},
  author={Banakh, Taras and Stelmakh, Yaryna and Turek, S{\l}awomir},
  journal={Topology and its Applications},
  volume={304},
  pages={107782},
  year={2021},
  publisher={Elsevier}
}

@article{banakh2019golomb,
  title={The Golomb space is topologically rigid},
  author={Banakh, T and Spirito, D and Turek, S},
  journal={Commentationes Mathematicae Universitatis Carolinae},
  volume={62},
  number={3},
  pages={347--360},
  year={2021}
}

@article{Jhixon2023,
  author = {E. Aponte, J. Macías, L. Mejías and J. Vielma},
  title = {A strongly connected topology on the positive integers},
  year = {2023},
  note = {Unpublished results},
}

@article{jhixon2024,
  title={Another Topological Proof of the Infinitude of Prime Numbers},
  author={J. Macías},
  journal={Integers},
  volume={24},
  number={47},
  year={2024},
  publisher={Walter de Gruyter GmbH}
}

@article{MACIAS2024109070,
title = {The Macías topology on integral domains},
author = {J. Macías},
journal = {Topology and its Applications},
volume = {357},
pages = {109070},
year = {2024},
publisher={Elsevier}
}

@book{steen1978counterexamples,
  title={Counterexamples in topology},
  author={Steen, LA},
  year={1978},
  publisher={Springer-Verlag}
}
\end{document}